\theoremstyle{plain}
\newtheorem{theorem}{Theorem}[section]
\newtheorem{proposition}[theorem]{Proposition}
\newtheorem{cor}[theorem]{Corollary}
\theoremstyle{definition}
\numberwithin{equation}{section}
\numberwithin{equation}{section}
\begin{document}

\title{On the number of partitions of $n$ whose product of the summands is at most $n$}

    \author[P. J. Mahanta]{Pankaj Jyoti Mahanta}
    \address{Gonit Sora, Dhalpur, Assam 784165, India}
    \email{pankaj@gonitsora.com}
\subjclass[2020]{11P81, 05A17.}

\keywords{partition function, product of summands, multiplicative
partition, unordered factorization, floor function.}

\begin{abstract}
We prove an explicit formula to count the partitions of $n$ whose product of the summands is at most $n$. In the process, we also deduce a result to count the multiplicative partitions of $n$.
\end{abstract}

\maketitle

\section{Introduction}
A partition of a non-negative integer $n$ is a representation of
$n$ as a sum of unordered positive integers which are called parts
or summands of that partition. The number of partitions of $n$ is
denoted by $p(n)$. For example, the partitions of 7 are:
\begin{align*}
& 1+1+1+1+1+1+1,\\
& 1+1+1+1+1+2, 1+1+1+1+3, 1+1+1+4, 1+1+5, 1+6, 7,\\
& 1+1+1+2+2, 1+1+2+3, 1+2+4, 2+5, 1+3+3, 3+4,\\
& 1+2+2+2, 2+2+3.
\end{align*}
So, $p(7)=15$.

Many restricted partitions such as partitions with only odd
summands, partitions with distinct summands, partitions whose
summands are divisible by a certain number, partitions with
restricted number of summands, partitions with designated
summands, etc. have been studied over the last two and half
centuries. The partitions of $n$ whose product of the summands is
at most $n$ are another kind of restricted partition. We denote
the total number of such partitions of $n$ by $p_{\leq n}(n)$.
Also, we denote by $p_{=n}(n)$ the number of partitions of $n$
whose product of the summands is equal to $n$. Similarly we use
the notations $p_{<n}(n)$, $p_{\geq n}(n)$, etc.

The value of the product of the summands of a partition depends on
the summands of that partition which are greater than one. We call
these summands the non-one summands or non-one parts. Let $n$ be a
positive integer and the canonical decomposition of $n$ as a
product of distinct primes be
\[n=p_1^{\alpha_1}p_2^{\alpha_2}\cdots p_r^{\alpha_r},\]
where $\alpha_i\in \mathbb{N}$ for all $i=1,2,\dots,r$.

As a consequence of the fundamental theorem of arithmetic, we have the following proposition.
\begin{proposition}\label{pro1}
The product of the summands of a partition of $n$ is equal to $n$
if and only if
\begin{enumerate}
     \item each summand is a divisor of $n$, and
     \item $p_i$ appears as a factor of the summands exactly $\alpha_i$ times, for all $i=1,2,\dots,r$.
\end{enumerate}
\end{proposition}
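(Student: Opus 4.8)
The plan is to translate both conditions into the language of $p$-adic valuations and then let the fundamental theorem of arithmetic do the work. Write the partition as $n=a_1+a_2+\dots+a_k$ with each $a_j\ge 1$, and let $P=a_1a_2\cdots a_k$ be the product of its summands. For a prime $p$, write $v_p(\cdot)$ for the $p$-adic valuation; then $v_p(P)=\sum_{j=1}^{k}v_p(a_j)$ is exactly the number of times $p$ occurs as a prime factor among all the summands (the summands equal to $1$ contributing nothing). Hence condition (2) is nothing but the assertion that $v_{p_i}(P)=\alpha_i$ for all $i=1,\dots,r$, and the proposition becomes: $P=n$ if and only if (1) each $a_j\mid n$ and (2) $v_{p_i}(P)=\alpha_i$ for every $i$.

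For the forward direction, assume $P=n$. Each summand $a_j$ is one of the factors of the product $P$, so $a_j\mid P=n$, which is (1). Moreover $v_{p_i}(P)=v_{p_i}(n)=\alpha_i$ for each $i$, which by the remark above is (2). For the reverse direction, assume (1) and (2). By (1) every prime dividing some $a_j$ divides $n$, hence belongs to $\{p_1,\dots,p_r\}$, so no prime outside this set appears in the factorization of $P$ and therefore $P=\prod_{i=1}^{r}p_i^{\,v_{p_i}(P)}$. By (2) this equals $\prod_{i=1}^{r}p_i^{\alpha_i}=n$, completing the proof.

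The argument is short, and its only delicate point is in the reverse direction: condition (2) alone is insufficient, since it pins down only the exponents of the primes $p_1,\dots,p_r$ and says nothing about whether some summand drags in an extraneous prime. Thus one genuinely needs (1) to carry out the ``no other primes occur'' step; a quick way to keep this in mind is the partition $3+3$ of $n=6$, which fails both conditions and has product $9$. I expect making this exclusion-of-extraneous-primes step explicit to be the main thing to be careful about; everything else is a direct unwinding of the definitions together with unique factorization.
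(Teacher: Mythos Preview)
Your proof is correct, and the paper in fact states this proposition without proof (it moves directly to the next proposition), so there is nothing to compare against; your argument via $p$-adic valuations and unique factorization is the natural one and fills the gap cleanly.

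One small slip in your closing commentary, not in the proof itself: the partition $3+3$ of $n=6$ does \emph{not} fail condition~(1), since $3\mid 6$; it fails only condition~(2). If you want an example showing that (2) alone is insufficient---that is, where (2) holds but an extraneous prime appears---take $n=30=2\cdot 3\cdot 5$ and the partition $2+13+15$: each of $2,3,5$ occurs exactly once among the summands' prime factors, yet the summand $13$ introduces a prime outside $\{2,3,5\}$, and the product is $390\ne 30$. This is precisely the situation your proof handles by invoking condition~(1).
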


\begin{proposition}\label{pro2}
Let the product of the summands of a partition of $n$ be $n$, and
each non-one summand of the partition has one prime factor. The
total number of such partitions of $n$ is
\[\prod_{i=1}^rp(\alpha_i).\]
\end{proposition}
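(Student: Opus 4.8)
The plan is to exhibit a bijection between the partitions being counted and the $r$-tuples $(\lambda^{(1)},\dots,\lambda^{(r)})$ in which $\lambda^{(i)}$ is an ordinary partition of $\alpha_i$; once this is set up, the count is immediately $\prod_{i=1}^r p(\alpha_i)$. Starting from a partition of $n$ of the required type, Proposition 1 tells us that every summand divides $n$, and the hypothesis that each non-one summand has a single prime factor then forces every non-one summand to have the shape $p_i^{a}$ for a unique index $i\in\{1,\dots,r\}$ and some $a\ge 1$. So for each fixed $i$ I would collect the exponents $a$ occurring among the non-one summands that are powers of $p_i$; by part (2) of Proposition 1 these exponents sum to $\alpha_i$, hence they form (as a multiset) a partition $\lambda^{(i)}\vdash\alpha_i$. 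This is the forward map, and it manifestly forgets only the summands equal to $1$.

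For the inverse, take an arbitrary tuple with $\lambda^{(i)}\vdash\alpha_i$, and declare the non-one summands to be the prime powers $p_i^{a}$ as $a$ runs over the parts of $\lambda^{(i)}$ and $i$ runs over $1,\dots,r$. Their product is $p_1^{\alpha_1}\cdots p_r^{\alpha_r}=n$. Letting $S$ be their sum, I would append exactly $n-S$ copies of the summand $1$ to obtain a partition of $n$. The point that needs an argument is that $n-S\ge 0$, i.e.\ that a finite collection of integers each at least $2$ has sum no larger than its product; this follows from the elementary inequality $xy\ge x+y$ for $x,y\ge 2$ together with a short induction on the number of such summands. Every summand of the resulting partition divides $n$, and the total multiplicity of $p_i$ among the summands equals the sum of the parts of $\lambda^{(i)}$, which is $\alpha_i$, so Proposition 1 certifies that the product of the summands is $n$; and by construction every non-one summand has exactly one prime factor.

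It remains to check that these two maps are mutually inverse, which is pure bookkeeping: the forward map deletes the $1$'s, and in any partition of $n$ the number of $1$'s is determined as $n$ minus the sum of the non-one parts, so no information is lost. Therefore the partitions in question are in bijection with $\prod_{i=1}^r\{\text{partitions of }\alpha_i\}$, giving the claimed total $\prod_{i=1}^r p(\alpha_i)$. I expect the only genuine obstacle to be the sum-versus-product estimate that legitimizes the $1$-padding step; the rest is a direct unwinding of Proposition 1.
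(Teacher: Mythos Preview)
The paper states this proposition without proof, so there is no argument to compare against; your bijective proof is correct and supplies exactly the justification the paper omits. The only nontrivial step, namely that a multiset of integers each at least $2$ has sum at most its product (so that the $1$-padding is legitimate), is handled correctly by your remark that $xy\ge x+y$ for $x,y\ge 2$ together with induction.
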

\begin{proof}
Let $\lambda_1+\lambda_2+\cdots+\lambda_t$ be a partition of $\alpha_1$. Then,
\[p_1^{\lambda_1}+p_1^{\lambda_2}+\cdots+p_1^{\lambda_t}\leq p_1^{\alpha_1},\]
since, for all positive integers $a,b>1$, we have $a+b\leq ab$ and the equality holds only for $a=b=2$. So, for any positive integer $c>1$, we have $a+b+c\leq ab+c<abc$, and so on.

Here, each non-one summand of a partition of $n$ has only one prime factor. So, by the Proposition \ref{pro1}, we get the result.
\end{proof}

A multiplicative partition of a positive integer $n$ is a
representation of $n$ as a product of unordered positive non-one
integers. The multiplicative partition function was introduced by
MacMahon \cite{macmahon1924dirichlet}, \cite{oppenheim1926arithmetic} in 1923. Since then many
properties of this function have been studied. Some works can be
found in \cite{balasubramanian2011number}, \cite{canfield1983problem}, \cite{chamberland2013multiplicative}, \cite{ghosh2008counting},
\cite{hughes1983number} and \cite{oppenheim1927arithmetic}. $p_{=n}(n)$ is equal to the
total number of multiplicative partitions of $n$. In this paper,
we introduce and prove formulas for $p_{\leq n}(n)$ and
$p_{<n}(n)$, and using both we find $p_{=n}(n)$.

\section{Formulas for $p_{\leq n}(n)$, $p_{<n}(n)$ and $p_{=n}(n)$}

The floor function of $x$, denoted by $\left\lfloor x
\right\rfloor$, is the greatest integer less than or equal to $x$,
where $x$ is any real number. In this section this function
appears many times.

\begin{theorem}\label{th1} We have
\begin{equation}\label{eq:lesseq}
p_{\leq
n}(n)=n+\sum_{k=2}^\ell\sum_{i_1=2}^{\left\lfloor\sqrt[k]{n}\right\rfloor}\sum_{i_2=i_1}^{\left\lfloor\sqrt[k-1]{\frac{n}{i_1}}\right\rfloor}\sum_{i_3=i_2}^{\left\lfloor\sqrt[k-2]{\frac{n}{i_1i_2}}\right\rfloor}\cdots\sum_{i_{k-1}=i_{k-2}}^{\left\lfloor\sqrt{\frac{n}{i_1i_2\cdots
i_{k-2}}}\right\rfloor}\biggl(\left\lfloor\frac{n}{i_1i_2\cdots
i_{k-1}}\right\rfloor-i_{k-1}+1\biggr),
\end{equation}
where $2^\ell\leq n<2^{\ell+1}.$
\end{theorem}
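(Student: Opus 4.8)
The plan is to classify the partitions of $n$ counted by $p_{\leq n}(n)$ according to their number of non-one summands, since the product of the summands is exactly the product of the non-one parts, and adding or removing $1$'s does not change that product. First I would observe that a partition of $n$ counted by $p_{\leq n}(n)$ is completely determined by its multiset of non-one parts $\{i_1, i_2, \dots, i_m\}$ (with $2 \le i_1 \le i_2 \le \cdots \le i_m$): the remaining $n - (i_1 + \cdots + i_m)$ parts must all be $1$'s, and this is a valid partition precisely when $i_1 + \cdots + i_m \le n$, a condition automatically satisfied whenever the product condition $i_1 i_2 \cdots i_m \le n$ holds (because a sum of integers $\ge 2$ never exceeds their product, except when the list is empty). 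Hence I would set up a bijection between the partitions counted and the finite non-decreasing sequences $(i_1, \dots, i_m)$ of integers $\ge 2$ with $i_1 i_2 \cdots i_m \le n$, together with the empty sequence. The empty sequence corresponds to the single partition $1 + 1 + \cdots + 1$; the sequences of length $1$ are $(i_1)$ with $2 \le i_1 \le n$, contributing $n - 1$ partitions. Together these give the leading term $n$ in \eqref{eq:lesseq}.

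Next I would handle sequences of length $k \ge 2$. Writing $m = k$, I want to count non-decreasing tuples $(i_1, \dots, i_k)$ with each $i_j \ge 2$ and $\prod_{j=1}^k i_j \le n$. I would build the tuple coordinate by coordinate. Given a partial choice $i_1 \le i_2 \le \cdots \le i_t$ already fixed, the next entry $i_{t+1}$ must satisfy $i_{t+1} \ge i_t$ and, to leave room for the remaining $k - t$ entries each at least $i_{t+1}$, must satisfy $i_{t+1}^{\,k-t} \le n/(i_1 \cdots i_t)$, i.e. $i_{t+1} \le \lfloor \sqrt[k-t]{n/(i_1 \cdots i_t)} \rfloor$. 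This is exactly the range of the $(t+1)$-st summation index in \eqref{eq:lesseq}: $i_2$ ranges up to $\lfloor \sqrt[k-1]{n/i_1}\rfloor$, $i_3$ up to $\lfloor \sqrt[k-2]{n/(i_1 i_2)}\rfloor$, and so on, until $i_{k-1}$ ranges up to $\lfloor \sqrt{n/(i_1 \cdots i_{k-2})}\rfloor$. Once $i_1, \dots, i_{k-1}$ are fixed, the last entry $i_k$ must satisfy $i_k \ge i_{k-1}$ and $i_k \le \lfloor n/(i_1 \cdots i_{k-1})\rfloor$, so the number of choices for $i_k$ is $\lfloor n/(i_1 \cdots i_{k-1})\rfloor - i_{k-1} + 1$ — the innermost summand of \eqref{eq:lesseq} — provided this quantity is non-negative, which it is exactly when such an $i_k$ exists. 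Summing over all admissible $(i_1, \dots, i_{k-1})$ in the nested ranges gives the number of length-$k$ tuples, and summing over $k$ from $2$ to $l$ completes the count.

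The one genuine point requiring care — the main obstacle — is the upper limit $l$ on $k$, i.e. justifying that no partition counted by $p_{\leq n}(n)$ has more than $l$ non-one summands, where $2^l \le n < 2^{l+1}$. This follows because if $(i_1, \dots, i_k)$ is such a tuple then $2^k \le i_1 i_2 \cdots i_k \le n < 2^{l+1}$, forcing $k \le l$; conversely the ranges are designed so that for $k \le l$ the outermost index $i_1$ starts at $2$ and runs up to $\lfloor \sqrt[k]{n} \rfloor \ge 2$, so no admissible range is empty merely by fiat. I would also verify the boundary consistency of the nested floor bounds: that the stated upper limit for $i_{t+1}$ is always $\ge i_t$ whenever the partial product still permits completing the tuple, so that the summation ranges are non-vacuous exactly on the set of tuples we intend to count, and each valid tuple is counted once and only once. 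With the bijection established and these range computations verified, formula \eqref{eq:lesseq} follows. A secondary check is that the innermost term $\lfloor n/(i_1\cdots i_{k-1})\rfloor - i_{k-1} + 1$ is never negative within the stated ranges, which holds because $i_{k-1} \le \lfloor\sqrt{n/(i_1\cdots i_{k-2})}\rfloor$ forces $i_{k-1}^2 \le n/(i_1\cdots i_{k-2})$, hence $i_{k-1} \le n/(i_1\cdots i_{k-1})$.
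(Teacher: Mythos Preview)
Your proposal is correct and follows essentially the same approach as the paper: both classify the partitions counted by $p_{\leq n}(n)$ according to the number $k$ of non-one summands, observe that these are in bijection with non-decreasing $k$-tuples of integers $\ge 2$ with product at most $n$, and derive the nested floor bounds on $i_1,\dots,i_{k-1}$ together with the inner count for $i_k$, with the cutoff $k\le l$ coming from $2^k\le n$. The paper develops the same ideas by working out the cases $k=0,1,2,4$ explicitly before stating the general pattern, whereas you state the bijection and the index-by-index range argument directly and add the useful sanity checks (that $i_1+\cdots+i_m\le n$ follows from the product bound, and that the innermost summand is non-negative), but the substance is the same.
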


\begin{proof}
$1+1+1+\cdots+1$ is the only partition of $n$ where there is no
non-one summand. The partitions of $n$ where there is only one
non-one summand are
\begin{center}
$1+\cdots+1+1+2,1+\cdots+1+3,\dots,1+1+(n-2),1+(n-1)$ and $n$.
\end{center}
So we get $n-1$ partitions whose product of summands $< n$ and
one partition whose product of summands $=n$.

Now, we count the partitions where there are only two non-one
summands.
\begin{align*}
1+\cdots+1+1+2+2,1+\cdots+1+2+3,1+\cdots+2+4,\dots,1+2+(n-3),2+(n-2),\\
1+\cdots+1+3+3,1+\cdots+3+4,\dots,1+3+(n-4),3+(n-3),\\
\dots \dots,\\
\dots \dots,\\
\text{and so on}.
\end{align*}
Here, in the first row, where the initial non-one summand is 2,
the total number of partitions whose product of summands $\leq n$
is $\left\lfloor\frac{n}{2}\right\rfloor-1$. That of second row is
$\left\lfloor\frac{n}{3}\right\rfloor-2$. In this way we can count
up to the row where initial non-one summand is
$\left\lfloor\sqrt{n}\right\rfloor$; since
$(\left\lfloor\sqrt{n}\right\rfloor+1)(\left\lfloor\sqrt{n}\right\rfloor+1)>n$.
Thus, the total number of such partitions having exactly two
non-one summands is
\[\sum_{i_1=2}^{\left\lfloor\sqrt{n}\right\rfloor}\biggl(\left\lfloor\frac{n}{i_1}\right\rfloor-i_1+1\biggr).\]

To see the clear picture, we now count such partitions where there
are four non-one summands.
{\fontsize{11}{11}\selectfont
\begin{align*}
1+\cdots+1+1+2+2+2+2,1+\cdots+1+2+2+2+3,1+\cdots+2+2+2+4,\dots,2+2+2+(n-6),\\
1+\cdots+1+2+2+3+3,1+\cdots+2+2+3+4,\dots,2+2+3+(n-7),\\
1+\cdots+1+2+2+4+4,\dots,2+2+4+(n-8),\\
\dots \dots,\\
\dots \dots,\\
1+\cdots+1+2+3+3+3,1+\cdots+2+3+3+4,1+\cdots+2+3+3+5,\dots,2+3+3+(n-8),\\
1+\cdots+2+3+4+4,1+\cdots+2+3+4+5,\dots,2+3+4+(n-9),\\
1+\cdots+2+3+5+5,\dots,2+3+5+(n-10),\\
\dots \dots,\\
\dots \dots,\\
1+\cdots+1+3+3+3+3,1+\cdots+3+3+3+4,1+\cdots+3+3+3+5,\dots,3+3+3+(n-9),\\
\dots \dots,\\
\dots \dots,\\
\text{and so on}.
\end{align*}
}

In the first row, the number of partitions whose product of
summands $\leq n$ is $\left\lfloor\frac{n}{2\cdot 2\cdot
2}\right\rfloor-1$. So, in the first group of rows, the number of
partitions whose product of summands $\leq n$ is
\[\sum_{i_3=2}^{\left\lfloor\sqrt{\frac{n}{2\cdot
2}}\right\rfloor}\biggl(\left\lfloor\frac{n}{2\cdot 2\cdot
i_3}\right\rfloor-i_3+1\biggr).\] In the second group of rows it
equals to
\[\sum_{i_3=3}^{\left\lfloor\sqrt{\frac{n}{2\cdot
3}}\right\rfloor}\biggl(\left\lfloor\frac{n}{2\cdot 3\cdot
i_3}\right\rfloor-i_3+1\biggr).\] When the initial non-one summand
is 2, then we can count up to the partitions where the second
non-one summand is
$\left\lfloor\sqrt[3]{\frac{n}{2}}\right\rfloor$. Thus, when the
initial non-one summand is 2, then the number of partitions whose
product of summands $\leq n$ is
\[\sum_{i_2=2}^{\left\lfloor\sqrt[3]{\frac{n}{2}}\right\rfloor}\sum_{i_3=i_2}^{\left\lfloor\sqrt{\frac{n}{2\cdot
i_2}}\right\rfloor}\biggl(\left\lfloor\frac{n}{2\cdot i_2\cdot
i_3}\right\rfloor-i_3+1\biggr).\]
In this way, we can count up to
the group of rows, where the initial non-one summand is
$\left\lfloor\sqrt[4]{n}\right\rfloor$. Therefore, the total
number of such partitions having exactly four non-one summands is
\[\sum_{i_1=2}^{\left\lfloor\sqrt[4]{n}\right\rfloor}\sum_{i_2=i_1}^{\left\lfloor\sqrt[3]{\frac{n}{i_1}}\right\rfloor}\sum_{i_3=i_2}^{\left\lfloor\sqrt{\frac{n}{i_1\cdot
i_2}}\right\rfloor}\biggl(\left\lfloor\frac{n}{i_1\cdot i_2\cdot
i_3}\right\rfloor-i_3+1\biggr).\]

Thus, when there are exactly $k$ non-one summands, then the number
of partitions whose product of summands $\leq n$ is
\begin{align}\label{kExpression}
\sum_{i_1=2}^{\left\lfloor\sqrt[k]{n}\right\rfloor}\sum_{i_2=i_1}^{\left\lfloor\sqrt[k-1]{\frac{n}{i_1}}\right\rfloor}\sum_{i_3=i_2}^{\left\lfloor\sqrt[k-2]{\frac{n}{i_1i_2}}\right\rfloor}\cdots\sum_{i_{k-1}=i_{k-2}}^{\left\lfloor\sqrt{\frac{n}{i_1i_2\cdots
i_{k-2}}}\right\rfloor}\biggl(\left\lfloor\frac{n}{i_1i_2\cdots
i_{k-1}}\right\rfloor-i_{k-1}+1\biggr).
\end{align}

If $2^\ell\leq n<2^{\ell+1}$, then a partition of $n$, whose product of
summands is $\leq n$, must have at most $\ell$ non-one summands. This
completes the proof.
\end{proof}

\begin{cor}\label{cor1} We have
\[p_{\leq n}(n)=p_{<n+1}(n+1).\]
\end{cor}
\begin{proof}
If we observe the expression \ref{kExpression}, then we see that if the product of the
summands is $n$ then $n$ is divisible by $i_1i_2\cdots i_{k-1}$. So, to find $p_{<n}(n)$, we can take
\[\left\lfloor\frac{n-1}{i_1i_2\cdots i_{k-1}}\right\rfloor-i_{k-1}+1,\]
since $a\nmid n$ implies $\left\lfloor\frac{n}{a}\right\rfloor=\left\lfloor\frac{n-1}{a}\right\rfloor$ for all positive integer $a$.

Again, when there are exactly $k$ non-one summands, then the product of the summands is $n$ if $\sqrt[k]{n}$ is an integer. Also, if $\sqrt[k]{n}$ is not an integer, then $\left\lfloor\sqrt[k]{n}\right\rfloor=\left\lfloor\sqrt[k]{n-1}\right\rfloor$. So, to find $p_{<n}(n)$, we can take the value of $i_1$ up to $\left\lfloor\sqrt[k]{n-1}\right\rfloor$. In this way we get, when there are exactly $k$ non-one summands, then the number of partitions whose product of summands $< n$ is
\begin{align*}
	\sum_{i_1=2}^{\left\lfloor\sqrt[k]{n-1}\right\rfloor}\sum_{i_2=i_1}^{\left\lfloor\sqrt[k-1]{\frac{n-1}{i_1}}\right\rfloor}\sum_{i_3=i_2}^{\left\lfloor\sqrt[k-2]{\frac{n-1}{i_1i_2}}\right\rfloor}\cdots\sum_{i_{k-1}=i_{k-2}}^{\left\lfloor\sqrt{\frac{n-1}{i_1i_2\cdots
				i_{k-2}}}\right\rfloor}\biggl(\left\lfloor\frac{n-1}{i_1i_2\cdots
		i_{k-1}}\right\rfloor-i_{k-1}+1\biggr).
\end{align*}
Now, we get the following two cases. 
\begin{enumerate}
	\item $2^\ell< n<2^{\ell+1}$. In this case, $2^\ell\leq n-1<2^{\ell+1}$ and a partition of $n$ whose product of summands is $< n$ must have at most $\ell$ non-one summands.
	\item $n=2^\ell$. In this case, $2^{\ell-1}\leq n-1<2^\ell$ and we must have at most $\ell-1$ non-one summands.
\end{enumerate}
So, combining both cases, we can take the value of $k$ up to $s$, where $2^s\leq n-1<2^{s+1}$. Hence,
\begin{align*}
	p_{<n}(n)=& \  n-1+\sum_{k=2}^{s}\sum_{i_1=2}^{\left\lfloor\sqrt[k]{n-1}\right\rfloor}\sum_{i_2=i_1}^{\left\lfloor\sqrt[k-1]{\frac{n-1}{i_1}}\right\rfloor}\sum_{i_3=i_2}^{\left\lfloor\sqrt[k-2]{\frac{n-1}{i_1i_2}}\right\rfloor}\cdots\sum_{i_{k-1}=i_{k-2}}^{\left\lfloor\sqrt{\frac{n-1}{i_1i_2\cdots
				i_{k-2}}}\right\rfloor}\biggl(\left\lfloor\frac{n-1}{i_1i_2\cdots
		i_{k-1}}\right\rfloor-i_{k-1}+1\biggr)\\
=& \	p_{\leq n-1}(n-1).
\end{align*}

\end{proof}

\begin{cor}\label{cor2} For $n>1$, we have
\[p_{=n}(n)=p_{\leq n}(n)-p_{\leq n-1}(n-1).\]
\end{cor}

\begin{cor} If $n$ is a prime, then
\[p_{\leq n}(n)=p_{\leq n-1}(n-1)+1.\]
\end{cor}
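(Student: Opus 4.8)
The plan is to reduce the statement to the fact that a prime admits exactly one partition whose product of summands equals itself, and then simply read off the result from an earlier corollary.

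First I would apply the second assertion of Corollary~\ref{eqiv}, namely
\[
p_{\leq n}(n)=p_{=n}(n)+p_{\leq n-1}(n-1).
\]
This already does almost all of the work: it shows that $p_{\leq n}(n)-p_{\leq n-1}(n-1)=p_{=n}(n)$, so the claimed identity $p_{\leq n}(n)=p_{\leq n-1}(n-1)+1$ is \emph{equivalent} to the single fact that $p_{=n}(n)=1$ when $n$ is prime.

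Next I would verify $p_{=n}(n)=1$ for prime $n$. Take any partition of $n$ whose product of summands equals $n$. The product of its non-one summands is then $n$, and since $n$ is prime there can be exactly one non-one summand, equal to $n$ itself. But then the sum of all the summands is $n$ plus the number of $1$'s appearing, which equals $n$ only if no $1$ occurs. Hence the one-part partition $n$ is the unique such partition and $p_{=n}(n)=1$. (Equivalently: $p_{=n}(n)$ counts the multiplicative partitions of $n$, and a prime has exactly one; or one may invoke the proposition characterising product-$n$ partitions, since the only divisors of a prime are $1$ and $n$, with $n$ forced to occur once.) Combining the two steps gives $p_{\leq n}(n)=p_{\leq n-1}(n-1)+1$.

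There is essentially no obstacle here: the entire content is carried by Corollary~\ref{eqiv}(2), and the remaining input is the elementary observation that a prime has a unique multiplicative partition. The only points requiring a little care are to use the \emph{second} part of Corollary~\ref{eqiv} rather than the first, and to note explicitly that the trivial partition $n$ has product exactly $n$ (so that it is the one partition distinguishing $p_{=n}$ from being zero, and not merely an extra partition counted by $p_{\leq n}$).
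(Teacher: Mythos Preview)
Your proof is correct but follows a different route from the paper's. The paper argues directly from the explicit formula of Theorem~\ref{th1}: for prime $n$ one has $\lfloor n/m\rfloor=\lfloor (n-1)/m\rfloor$ for every $2\le m\le n-1$, so each inner summand in \eqref{eq:lesseq} coincides with the corresponding summand for $n-1$, and the leading term $n$ splits as $1+(n-1)$, yielding $p_{\leq n}(n)=1+p_{\leq n-1}(n-1)$. You instead invoke Corollary~\ref{eqiv}(2) and reduce everything to the elementary observation that $p_{=n}(n)=1$ for prime $n$. Your approach is shorter and more conceptual, using only the combinatorial interpretation of $p_{=n}$ as the number of multiplicative partitions; the paper's approach is more computational but has the advantage of deriving the identity straight from the formula, without passing through Corollary~\ref{eqiv}.
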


\begin{proof}
If $n$ is a prime, then the product of the summands of any partition of $n$ with more than one non-one summands can not be $n$. So, from the proof of the Corollary \ref{cor1}, we get
\[
\begin{split}
p_{\leq n}(n)
&=1+n-1+\sum_{k=2}^{s}\sum_{i_1=2}^{\left\lfloor\sqrt[k]{n-1}\right\rfloor}\sum_{i_2=i_1}^{\left\lfloor\sqrt[k-1]{\frac{n-1}{i_1}}\right\rfloor}\sum_{i_3=i_2}^{\left\lfloor\sqrt[k-2]{\frac{n-1}{i_1i_2}}\right\rfloor}\cdots\sum_{i_{k-1}=i_{k-2}}^{\left\lfloor\sqrt{\frac{n-1}{i_1i_2\cdots
i_{k-2}}}\right\rfloor}\biggl(\left\lfloor\frac{n-1}{i_1i_2\cdots
i_{k-1}}\right\rfloor-i_{k-1}+1\biggr),\\
&  \hspace{108mm} \text{where} \ 2^s\leq n-1<2^{s+1}\\
&=1+p_{\leq n-1}(n-1).
\end{split}
\]

\end{proof}

\section{Concluding remarks} A few values of $p_{=n}(n)$, $p_{\leq n}(n)$, $p_{\geq n}(n)$ and
$p_{>n}(n)$ can be found in the OEIS sequences
\href{https://oeis.org/A001055}{A001055},
\href{https://oeis.org/A096276}{A096276},
\href{https://oeis.org/A319005}{A319005} and
\href{https://oeis.org/A114324}{A114324} respectively. The Corollary \ref{cor1} implies that the sequence
\href{https://oeis.org/A096276}{A096276} gives the values of
$p_{<n}(n)$ too.

It seems that an explicit formula for $p_{\geq n}(n)$ or
$p_{>n}(n)$ in the spirit of Theorem \ref{th1} can be found. We
leave this as an open problem.

\section*{Acknowledgments}
The author is grateful to Manjil P. Saikia for his helpful
comments on an earlier version of the paper. He also thanks the anonymous referee for useful suggestions.

\bibliographystyle{plain}
\bibliography{ref.bib}

\end{document}